    \newcommand{\Mwf}{\mathcal{M}}
    \newcommand{\Nwf}{\mathcal{N}}
    \newcommand{\Aor}{\mathbb{A}}
    \newcommand{\Bor}{\mathds{B}}
    \newcommand{\Cor}{\mathds{C}}
    \newcommand{\Dor}{\mathds{D}}
    \newcommand{\Eor}{\mathds{E}}
    \newcommand{\Lor}{\mathds{L}}
    \newcommand{\Loc}{\mathds{LOC}}
    \newcommand{\Mor}{\mathds{M}}
    \newcommand{\Por}{\mathds{P}}
    \newcommand{\Ior}{\mathds{I}}
    \newcommand{\Vor}{\mathds{V}}
    \newcommand{\Sor}{\mathds{S}}
    \newcommand{\SNwf}{\mathcal{SN}}
    \newcommand{\SMwf}{\mathcal{SM}}
    \newcommand{\la}{\langle}
    \newcommand{\ra}{\rangle}
\newcommand{\Lb}{\mathbf{Lb}}
\newcommand{\hgt}{\mathrm{ht}}
\title{Cohen real or random real: effect on strong measure zero sets and strongly meager sets}
\author{Miguel A. Cardona}
\date{Institute of Discrete Mathematics and Geometry.\\
Faculty of Mathematics and Geoinformation.\\
\addvspace{\medskipamount}
TU Wien.}
\begin{document}

\AtEndDocument{\bigskip{\footnotesize%
  \textrm{Institute of Discrete Mathematics and Geometry} \par 
  \textrm{Faculty of Mathematics and Geoinformation}\par
  \textrm{TU Wien}\par
  \textrm{Wiedner Hauptstrasse 8--10/104 A--1040 Wien}\par
  \textsc{Austria}\par
  \textit{E-mail address}: \texttt{miguel.montoya@tuwien.ac.at} \par
  \textit{URL}:\, https://www.researchgate.net/profile/Miguel\_Cardona\_Montoya \par
}}

\makeatletter
\def\@roman#1{\romannumeral #1}
\makeatother

\newcounter{enuAlph}
\renewcommand{\theenuAlph}{\Alph{enuAlph}}

\renewcommand{\theequation}{\thesection.\arabic{equation}}

\theoremstyle{plain}
  \newtheorem{theorem}{Theorem}[section]
  \newtheorem{corollary}[theorem]{Corollary}
  \newtheorem{lemma}[theorem]{Lemma}
  \newtheorem{mainlemma}[theorem]{Main Lemma}
  \newtheorem{maintheorem}[enuAlph]{Main Theorem}
  \newtheorem{prop}[theorem]{Proposition}
  \newtheorem{clm}[theorem]{Claim}
  \newtheorem{exer}[theorem]{Exercise}
  \newtheorem{question}[theorem]{Question}
  \newtheorem{problem}[theorem]{Problem}
  \newtheorem{conjecture}[theorem]{Conjecture}
  \newtheorem*{thm}{Theorem}
  \newtheorem{teorema}[enuAlph]{Theorem}
  \newtheorem*{corolario}{Corollary}
\theoremstyle{definition}
  \newtheorem{definition}[theorem]{Definition}
  \newtheorem{example}[theorem]{Example}
  \newtheorem{remark}[theorem]{Remark}
  \newtheorem{notation}[theorem]{Notation}
  \newtheorem{context}[theorem]{Context}
\newtheorem{fact}[theorem]{Fact}
  \newtheorem*{defi}{Definition}
  \newtheorem*{acknowledgements}{Acknowledgements}
  \newtheorem*{main-theorem}{Main Theorem}

\maketitle

%\tableofcontents

\begin{abstract}
We show that the set  of  the  ground-model  reals has strong measure zero (is strongly meager) after adding a single Cohen real (random real). As consequence we prove that the set  of  the  ground-model  reals has strong measure zero after adding a single Hechler real.
\end{abstract}
\maketitle

\section{Introduction}
Let $\Nwf$ be the $\sigma$-ideal of measure zero subsets of $2^\omega$, and  let $\Mwf$ be the $\sigma$-ideal of meager sets in $2^\omega$. More concretely $X\in\Mwf$ if there is some sequence $\la F_n:n<\omega\ra$ such that $X= \bigcup_{n<\omega}F_n$ and $\textrm{int}(\textrm{cl}(F_n))=\emptyset$. Let $\Cor$ and $\Bor$ be the Cohen algebra and random algebra respectively, let $\Dor$ be the Hechler forcing, let $\Lor$ be the Laver forcing, let $\Mor$ be Mathias forcing, let $\Vor$ be Silver forcing and let $\Sor$ be Sacks forcing. 

\begin{definition} For each $\sigma\in(2^{<\omega})^\omega$ define $\hgt\in\omega^\omega$ by $\hgt_\sigma(n):=|\sigma(n)|$.

Say that \emph{$X\subseteq 2^\omega$ has strong measure zero} ($X\in\SNwf$) if, for each function $f\in\omega^\omega$ there is some $\sigma\in(2^{<\omega})^\omega$ with $\hgt_\sigma=f$ such that  $X\subseteq \bigcup_{n<\omega}[\sigma(n)]$. 

It is clear that $\SNwf\subseteq\Nwf$.
\end{definition}

Galvin, Mycielski and Solovay \cite{GaMS} gave a very important description of the strong measure zero sets.

\begin{theorem}[{\cite{GaMS}}]
The following are equivalent:
\begin{itemize}
    \item[(1)] $X\in\SNwf$,
    \item[(2)] for every set $F\in\Mwf$, there is some $x\in2^\omega$ such that $(x+X)\cap F=\emptyset$. 
\end{itemize}
\end{theorem}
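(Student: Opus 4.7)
\emph{Plan.} The proof splits into the two implications; throughout, $+$ denotes coordinatewise addition modulo $2$ in $2^\omega$.

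For $(2) \Rightarrow (1)$, fix $f \in \omega^\omega$. The idea is to package a prospective cover of $X$ of the prescribed heights into a single meager set. Enumerate $2^{<\omega} = \{s_k : k<\omega\}$ and, for each $k$, pick $\tau_k \in 2^{f(k)}$ compatible with $s_k$ (extend by zeros if $|s_k| \leq f(k)$, truncate to length $f(k)$ otherwise). Then $U := \bigcup_k [\tau_k]$ is open and dense in $2^\omega$, so $F := 2^\omega \setminus U$ is closed and nowhere dense, hence meager. Apply (2) to obtain $x$ with $(x + X) \cap F = \emptyset$, that is, $x + X \subseteq U$; setting $\sigma(k) := (x \frestr f(k)) + \tau_k$ then yields a cover of $X$ by cylinders with $\hgt_\sigma = f$.

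For $(1) \Rightarrow (2)$, write $F = \bigcup_n F_n$ with $F_n$ closed and nowhere dense and $F_n \subseteq F_{n+1}$. Using the nowhere-density of $F_k$ together with finiteness of $2^{n_k}$, recursively build $0 = n_0 < n_1 < \cdots$ and maps $T_k \colon 2^{n_k} \to 2^{n_{k+1}-n_k}$ such that $[s^\frown T_k(s)] \cap F_k = \emptyset$ for every $s \in 2^{n_k}$. The main step is to apply SMZ in the following strengthened form:
\[
\forall f\in\omega^\omega\ \exists\sigma\colon\ |\sigma(k)|=f(k)\ \text{ and }\ \forall x\in X,\ \{k : x\in[\sigma(k)]\}\text{ is infinite.}
\]
This follows from the bare definition by partitioning $\omega$ into countably many infinite subsets $A_m$, applying SMZ once per piece (with the $f$-values restricted to $A_m$), and reassembling the pieces into a single $\sigma$ via the partition. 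Instantiate this strengthened property with $f(k):=n_{k+1}$ to produce the desired $\sigma$.

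With $\sigma$ in hand, define $y \in 2^\omega$ blockwise by $y \frestr [n_k, n_{k+1}) := T_k\bigl(y\frestr n_k + \sigma(k)\frestr n_k\bigr) + \sigma(k) \frestr [n_k, n_{k+1})$. A direct calculation gives, for any $x \in [\sigma(k)]$, $(y+x)\frestr n_{k+1} = s\, {}^\frown T_k(s)$ with $s := y\frestr n_k + \sigma(k)\frestr n_k$, whence $y + x \notin F_k$ by the defining property of $T_k$. Since each $x \in X$ lies in $[\sigma(k)]$ for infinitely many $k$ and the $F_m$ are increasing, $y+x$ avoids every $F_m$, so $(y + X) \cap F = \emptyset$. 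The main obstacle is the second direction: a single SMZ cover is not enough, because an $x \in X$ might lie in only one $[\sigma(k)]$ and thus leave all $F_m$ with $m > k$ unaccounted for; the infinitely-often strengthening is precisely what lets the local nowhere-dense-extension property encoded in the $T_k$'s accumulate into global disjointness from $F$.
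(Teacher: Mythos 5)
Your proof is correct and complete in both directions: the dense-open trick for $(2)\Rightarrow(1)$, the ``infinitely often'' strengthening of strong measure zero obtained by splitting $\omega$ into infinitely many infinite pieces, and the blockwise construction of $y$ from the maps $T_k$ all check out (in particular the mod-$2$ cancellation giving $(y+x)\frestr n_{k+1}=s^\frown T_k(s)$ is right). The paper does not prove this statement --- it is quoted from Galvin--Mycielski--Solovay with a citation only --- so there is nothing to compare against; what you have written is essentially the standard published argument for this theorem.
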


Using this characterization, we consider the following objects.

\begin{definition}
We say that  \emph{$X\subseteq2^\omega$ is strongly meager} ($X\in\SMwf$) if, for each $N\in\Nwf$, there is $x\in2^\omega$ such that $(X+x)\cap N=\emptyset$. 

It is clear that $\SMwf\subseteq\Mwf$.
\end{definition}

Kunen \cite{Kunen} proved that after adding a single Cohen real (random real) the set of the ground-model reals becomes null (meager). More presicely,
\begin{theorem}[{\cite{Kunen}}]\label{cohran} If $c$ and $r$ are a Cohen real and a random real over $V$ respectively, then 
\begin{itemize} 
    \item[(i)] $V[c]\models 2^\omega\cap V\in \Nwf$ and $2^\omega\cap V\notin\Mwf$. In particular, $V[c]\models 2^\omega\cap V\notin \SMwf$.
    \item[(ii)] $V[r]\models 2^\omega\cap V\in \Mwf$ and $2^\omega\cap V\notin\Nwf$. In particular, $V[r]\models 2^\omega\cap V\notin \SNwf$.
\end{itemize}
\end{theorem}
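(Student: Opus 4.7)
The plan is to handle the four principal assertions directly; the ``in particular'' clauses follow at once from $\SMwf \subseteq \Mwf$ and $\SNwf \subseteq \Nwf$. The two positive directions --- that $2^\omega \cap V$ lies in the larger ideal in the extension --- I would obtain by reading a suitable covering family directly off the generic real. For (i), interpret the Cohen real as a sequence $\la \sigma_n : n < \omega\ra$ with $\sigma_n \in 2^{2n}$: a standard density argument shows that for every $x \in 2^\omega \cap V$ the set $\{n : \sigma_n \subseteq x\}$ is infinite, so $2^\omega \cap V \subseteq \bigcap_N \bigcup_{n \geq N}[\sigma_n]$, which is a $G_\delta$ of measure zero since $\sum_n 2^{-2n}$ converges. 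For (ii), partition $\omega$ into consecutive finite blocks $\la I_n : n < \omega\ra$ with $\sum_n 2^{-|I_n|} < \infty$ and put $F_N := \{x \in 2^\omega : x\frestr I_n \neq r\frestr I_n \text{ for all } n \geq N\}$; each $F_N$ is closed and nowhere dense, and for every $x \in 2^\omega \cap V$ the set $A_x := \{y : y\frestr I_n = x\frestr I_n \text{ for infinitely many } n\}$ is null (by Borel--Cantelli) and lies in $V$, so by randomness $r \notin A_x$, whence $x \in F_N$ for some $N$ and $2^\omega \cap V \subseteq \bigcup_N F_N \in \Mwf$.

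The two negative directions are dual; I would handle them by contradiction, via Fubini's theorem for (ii) and the Kuratowski--Ulam theorem for (i), both applied in $V$. For (ii), suppose $p \in \Bor$ forces $2^\omega \cap V \subseteq [\dot c^*]$ for some name $\dot c^*$ coding a null Borel set. A maximal antichain argument represents $\dot c^* = \tau(\dot r)$ for a Borel map $\tau$ in $V$, defined on $[p]$, whose values almost surely code null Borel sets; set $B := \{(x,y) \in 2^\omega \times [p] : x \in [\tau(y)]\}$. From the vantage point of $V$ every element of $2^\omega$ is a ground-model real, so the forcing hypothesis yields $\mu(\{y \in [p] : (x,y) \in B\}) = \mu(p)$ for \emph{every} $x \in 2^\omega$; Fubini then gives
\[\mu(p) = \int_{2^\omega} \mu(\{y : (x,y) \in B\})\,dx = \mu(B) = \int_{[p]}\mu([\tau(y)])\,dy = 0,\]
a contradiction. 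The argument for (i) is the same with meager in place of null, a Baire-measurable $\tau$, and Kuratowski--Ulam in place of Fubini: each $x$-fibre of $B$ is comeager in $[p]$ while each $y$-fibre is meager in $2^\omega$, so Kuratowski--Ulam makes $B$ simultaneously non-meager and meager.

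The main obstacle is extracting the reading function $\tau$ from the name $\dot c^*$: one must invoke a maximal antichain below $p$ so that $\dot c^*$ becomes the canonical evaluation at the generic real of a $V$-definable Borel (respectively Baire-measurable) map, and then verify that the resulting set $B$ is Borel (respectively has the Baire property) so that Fubini or Kuratowski--Ulam applies. Once this representation is secured, the rest is the direct calculation sketched above.
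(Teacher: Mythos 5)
The paper does not prove this theorem: it is quoted as a classical result of Kunen with a citation only, so there is no in-paper argument to measure your proposal against. On its own merits your proof is correct and is essentially the standard one. The two positive halves are exactly the textbook arguments: for Cohen, the sets $D_{x,N}$ of conditions providing some $n\geq N$ with $\sigma_n=x\frestr 2n$ are dense and lie in $V$, so genericity puts $2^\omega\cap V$ inside the null $G_\delta$ set $\bigcap_N\bigcup_{n\geq N}[\sigma_n]$; for random, Borel--Cantelli makes each $A_x$ null in $V$, and randomness of $r$ over $V$ then places each ground-model $x$ in some closed nowhere dense $F_N$. The two negative halves via Fubini and Kuratowski--Ulam are likewise the standard route, and the step you flag as the main obstacle is indeed the only nontrivial one: below a condition, a name for a Borel null (resp.\ meager) set can be represented as the section $B_{\dot r}$ (resp.\ $B_{\dot c}$) of a ground-model Borel set $B\subseteq 2^\omega\times 2^\omega$ whose sections are null (resp.\ meager) on a conull (resp.\ comeager) set of parameters; this is a routine fact for ccc forcings whose generic object is a real, since names for reals are then measurable (resp.\ Baire-measurable) functions of the generic. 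With that in place your fibre computation is sound --- note only that Kuratowski--Ulam requires $B$ to have the Baire property and delivers that comeagerly many vertical sections are meager, which already suffices to contradict all horizontal sections being comeager in $[p]$.
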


Motivated by Theorem \ref{cohran}. in this paper we prove that the set of the ground-model reals has strong measure zero after adding a single Cohen real. This was mentioned by Laver \cite{Laver} (without proof), afterwards, Goldstern sketched this in \cite{cohenSN}. We also prove that the set of the ground-model reals is strongly meager after adding a single random real. This was sketched in \cite{randomSM}. The author present a complete proof of these results with some slight variations associated with his perpective. 

\section{Main result}

This section is dedicated to prove the following main result.

\begin{teorema}\label{cohenrandom}
If $c$ and $r$ are a Cohen real and a random real over $V$ respectively, then 
\begin{itemize}
    \item[(i)] $V[c]\models2^\omega\cap V\in\SNwf$.
    \item[(ii)] $V[r]\models2^\omega\cap V\in\SMwf$.
\end{itemize}
\end{teorema}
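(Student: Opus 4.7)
The two parts are treated by dual arguments, reflecting the Cohen--random duality: (i) uses the combinatorial genericity of Cohen to produce small covers, while (ii) uses Fubini together with randomness to produce a translation avoiding a null set.

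For part~(i), I fix a $\Cor$-name $\dot f$ for $f\in V[c]\cap\omega^\omega$ and represent $\Cor$ as the (countable atomless) poset of finite partial functions $p\colon\omega\to 2^{<\omega}$, so that the generic decodes into a sequence $(c_n)_{n<\omega}\in(2^{<\omega})^\omega$. In $V[c]$ I set $\sigma(n):= c_n\restriction f(n)$ when $|c_n|\geq f(n)$ and pad with zeros otherwise, so that $\hgt_\sigma=f$ automatically. The heart of the proof is to show that for each $v\in V\cap 2^\omega$ the set
\[
D_v := \{p\in\Cor : p\Vdash \exists n\,[\dot\sigma(n)=v\restriction\dot f(n)]\}
\]
is dense in $\Cor$. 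Given $p$, I select $n^*\notin\dom(p)$ and a value $k^*$ that some extension of $p$ forces for $\dot f(n^*)$, and then extend $p$ to $q$ that both decides $\dot f(n^*)=k^*$ and satisfies $q(n^*)=v\restriction k^*$. The non-trivial point is that, by choosing $n^*$ sufficiently large (outside the finite support of the antichain deciding $\dot f(n^*)$) and exploiting the product-like structure of $\Cor$, these two requirements on $q$ can be jointly met. Once $D_v$ is dense, genericity of $c$ yields $v\in\bigcup_n[\sigma(n)]$.

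For part~(ii), I fix a $\Bor$-name $\dot N$ for a null $G_\delta$ set, writing $N=N_{F(r)}$ where $F\in V$ is a Borel coding function. The plan is to produce $x\in V[r]$ of the form $x=\Psi(r)$ for a suitable $V$-Borel map $\Psi\colon 2^\omega\to 2^\omega$, arranged so that for every $v\in V\cap 2^\omega$ the $V$-Borel set
\[
B_v^\Psi := \{s\in 2^\omega : v+\Psi(s)\in N_{F(s)}\}
\]
is null. Randomness of $r$ over $V$ will then force $r\notin B_v^\Psi$ for every $v\in V\cap 2^\omega$, whence $v+x\notin N$, as required. The naive choice $\Psi=\mathrm{id}$ already almost works: the Borel set $A:=\{(v,s):v+s\in N_{F(s)}\}$ has $\mu_2(A)=0$ by Fubini (since each vertical section $A^s=N_{F(s)}-s$ is a translate of a null set), so $B_v^{\mathrm{id}}$ is null for almost every $v$, but there is a $V$-null ``exceptional'' set $B$ of $v$'s where this fails.

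The \textbf{main obstacle} is to construct $\Psi$ eliminating this exceptional set $B$. My plan is to split $r=(r_0,r_1)$ as mutually random reals via the measure-preserving isomorphism $2^\omega\cong 2^\omega\times 2^\omega$ and design $\Psi(s_0,s_1)$ so that the extra coordinate $s_1$ serves as additional randomness uniformly nullifying $B_v^\Psi$ across all $v\in V\cap 2^\omega$. A key ingredient is Kunen's theorem (Theorem~\ref{cohran}) applied in $V[r_0]$: since $r_0$ is random over $V$, $V\cap 2^\omega$ is meager in $V[r_0]$, which constrains the distribution of ``bad'' $v$'s and enables a second Fubini step (now inside $V[r_0]$, with $r_1$ random over $V[r_0]$) to handle every $v\in V\cap 2^\omega$ uniformly. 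The resulting $\Psi$ then yields the desired $x=\Psi(r)\in V[r]$ with $(V\cap 2^\omega+x)\cap N=\emptyset$.
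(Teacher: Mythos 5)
Your construction of $\dot\sigma$ and the density claim for $D_v$ fail as stated. Take the name $\dot f(n):=|c_n|+1$ (decided by any condition with $n$ in its domain). Then $|c_n|<f(n)$ always, so your $\sigma(n)$ is $c_n$ padded by at least one $0$ and hence always ends in $0$; for $v=\la 1,1,1,\dots\ra$ we get $v\restriction f(n)\neq\sigma(n)$ for every $n$ in every extension, so $D_v=\emptyset$. The underlying problem is that the coordinate $n^*$ you want to use to code $v\restriction k^*$ may be exactly the coordinate on which $\dot f(n^*)$ depends, and your proposed repair --- choosing $n^*$ ``outside the finite support of the antichain deciding $\dot f(n^*)$'' --- is circular: that antichain depends on $n^*$, and being an infinite maximal antichain it has no finite support. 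The missing idea is a ground-model bookkeeping device that decouples the coordinate used for coding from the part of the condition needed to decide $\dot f(n)$. The paper does this by setting, for each of the countably many $p\in\Cor$, $F_p(m):=$ the least length of a condition below $p$ deciding $\dot f(m)$, choosing $F$ with $F_p\le^* F$ for all $p$, and then using coordinate $F(n)$ of the generic (read as an index into a fixed enumeration of $2^{<\omega}$) to code $x\restriction l$ \emph{after} $\dot f(n)=l$ has been decided by a condition of length $\le F(n)$, which therefore leaves coordinate $F(n)$ free. Without some such device your density argument cannot be completed.

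\textbf{On part (ii).} You correctly isolate the obstacle (the exceptional null set of bad $v$'s may meet $V\cap 2^\omega$), but the proposed resolution does not go through. Splitting $r=(r_0,r_1)$ gives no fresh randomness against $N$: the null set $N$ is an arbitrary null set of $V[r]$, so its Borel code may depend on $r_1$, and $r_1$ is not random over any model containing that code. Moreover, Kunen's theorem that $2^\omega\cap V$ is meager in $V[r_0]$ yields no measure-theoretic control over the exceptional set, since meager sets can have full measure. The paper's route is different and genuinely needed: fix the ground-model code $B$ of $N$ (so $N=B_r$), cover $B$ by $\bigcap_{m}\bigcup_{n\ge m}[s_n]\times[t_n]$, choose a sparse increasing $f\in V$ satisfying the growth conditions (a) and (b), and set $x:=r_f$. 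The key technical content is the explicit estimate of Claim \ref{clm}, giving $\Lb(H_n^z)\le 2^{|f^{-1}[|s_n|]|}2^{-2|s_n|}$ with a bound summable \emph{uniformly in all} $z\in 2^\omega$; note that $x\mapsto\la x,x_f\ra$ maps $2^\omega$ onto a null subset of the square, so no Fubini argument applies there and the computation must be done by hand. A single application of the randomness of $r$ over $V$ then gives $\la r,r_f+z\ra\notin B$, hence $r_f+z\notin N$, for all $z\in V\cap 2^\omega$ simultaneously. Your sketch does not supply a substitute for this construction.
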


\begin{proof}
\begin{itemize} 
\item[(i)] 
Enumerate $2^{<\omega}:=\{r_n:n<\omega\}$. For each $f\in\omega^{\omega}$ and $F\in\omega^\omega$ define \[B^c_{f,F}:=\bigcup_{n\in\omega}[r_{c(F(n))}{}^{\smallfrown}\la0,\ldots,0\ra]\]
where for each $n$, the length of $\la0,\ldots,0\ra$ is the greatest between $f(n)-|r_{c_{F(n)}}|$ and 0. Note that $B^c_{f,F}$ is coded in $V[c]$. It is enough to prove that, for any $\Cor$-name $\dot{f}$ in $\omega^\omega$ there is a function $F\in\omega^\omega$ such that $\Vdash_{\Cor} 2^\omega\cap V\subseteq B^{\dot{c}}_{\dot{f},F}$. 

In $V$ define a function $F_p\in\omega^\omega$ for each $p\in\Cor$ by \[F_p(m):=\min\Big\{k\in\omega:\exists q\in\Cor(|q|=k\wedge q\leq p\wedge\exists l<\omega(q\Vdash \dot{f}(m)=l))\Big\},\]
Choose $F\in\omega^\omega$ such that $F_p\leq^* F$ for all $p\in\Cor$. It remains to check that $\Vdash_{\Cor} 2^\omega\cap V\subseteq B^{\dot{c}}_{\dot{f},F}$. To do this, let $p$ be an arbitrary condition in $\Cor$. Choose $n<\omega$ such that $F_p(m)\leq F(m)$ for all $m\geq n$.
Now choose $q\in\Cor$ with $|q|=F_p(n)$ and $l<\omega$ such that $q$ extends $p$ and $q\Vdash \dot{f}(n)=l$. Let $x\in2^\omega\cap V$. Find $i<\omega$ such that $r_i:=x\upharpoonright l$. Define a condition $q^*\in\Cor$ such that $|q^*|=F(n)+1$, $q^*\Vdash \dot{c}(F(n))=i$ and $q^*\leq q$. 

Then, $q^*\Vdash x\in[r_{\dot{c}(F(n))}]\subseteq B^{\dot{c}}_{\dot{f},F}$ (this contention holds because $|r_{\dot{c}(F(n))}|=l=\dot{f}(n)$). 

\item[(ii)] For an increasing function $f\in\omega^\omega$ and a function $x\in2^\omega$ define $x_f\in2^\omega$ as $x_f(n):=x(f(n))$ for $n\in\omega$. Let $A$ be a Borel set in $V[r]\cap\Nwf$. In $V$ find a Borel null set such that $B\subseteq 2^\omega\times2^\omega$ and $A=B_r$. Since $B$ has measure zero, choose sequences $s_n, t_n\in2^{<\omega}$ with $|s_n|=|t_n|$ such that \[B\subseteq \bigcap_{m<\omega}\bigcup_{n\geq m}[s_n]\times[t_n]\textrm{\ and\ }\sum_{n=1}^{\infty}2^{-2|s_n|}<\infty.\]
Find an increasing function $f\in\omega^\omega$ by induction on $n$ such that 
\begin{itemize}
    \item[(a)] $j\leq f(n)\to |s_j|<f(n+1)$.
    \item[(b)] $\sum_{j\geq f(n)}\Lb([s_j]\times[t_j])\leq\frac{\Lb([s_n]\times[t_n])}{2^{n+2}}$
\end{itemize}
From (a) and (b) it follows that 
\begin{equation*}\label{eq1}
\begin{split}
(\star)\,\,\sum\limits_{f(n)\leq j<f(n+1)}\frac{2^{|f^{-1}[|s_j|]|}}{2^{2|s_j|}}&\leq\sum\limits_{f(n)\leq j< f(n+1)}\frac{2^{n+2}}{2^{2|s_j|}}\\
&\leq\Lb([s_n]\times[t_n]).
\end{split}
\end{equation*}
We first show that, for each $z\in V\cap
2^\omega$, \[\bigg\{x:\la x,x_{f}+z\ra\in\bigcap_{m<\omega}\bigcup_{n\geq m}[s_n]\times[t_n]\bigg\}\] has measure zero. To this end, let \[H_n^z:=\Big\{x:\la x,x_{f}\ra\in[s_n]\times[z\upharpoonright\!|t_n|+t_n]\Big\}\]
Then we have 
\[\bigg\{x:\la x,x_{f}+z\ra\in\bigcap_{m<\omega}\bigcup_{n\geq m}[s_n]\times[t_n]\Bigg\}=\bigcap_{m<\omega}\bigcup_{n\geq m} H_n^z\]
It remains to prove that $\bigcap_{m<\omega}\bigcup_{n\geq m} H_n^z$ has measure zero.
\begin{clm}\label{clm}
\[\Lb(H_n^z)\leq\frac{2^{|f^{-1}(|s_n|)|}}{2^{2|s_n|}}\]
\end{clm} 
\begin{proof}
Note that $H_n^z=[s_n]\cap[(z\upharpoonright\!|t_n|+t_n)\circ f^{-1}]$. Let $t':=(z\upharpoonright\!|t_n|+t_n)\circ f^{-1}$. Then 
$H_n^z=[s_n]\cap[(z\upharpoonright\!|t_n|+t_n)\circ f^{-1}]=\emptyset$ when $s_n$ and $t'$ are incompatible. Otherwise, 
\begin{align*}
     H_n^z&=[s_n\cup((z\upharpoonright\!|t_n|+t_n)\circ f^{-1})]\\
     &=[s_n\cup t']
\end{align*}
Hence, 
\begin{align*}
    \Lb\Big([s_n\cup t']\Big)&=2^{-|s_n\cup t'|}\\ &=2^{-|s_n|-|\{f(n):n<|t_n|\wedge f(n)\geq |s_n|\}|}\\
    &\leq2^{-|s_n|-|t_n|+|f^{-1}[|s_n|]}\\
    &=\frac{2^{|f^{-1}(|s_n|)|}}{2^{2|s_n|}}.
\end{align*}
This  ends  the  proof  of Claim \ref{clm}.
\end{proof}
We  continue  the  proof  of (ii). It follows that $\bigcap_{m<\omega}\bigcup_{n\geq m} H_n^z$ has measure zero by the Claim \ref{clm} and $(\star)$. In $V[r]$, since $r$ is a random real over $V$, $\la r,r_f+z\ra\not\in B$, which means that  $r_f+z\not \in A$. Therefore $(2^\omega\cap V)+A\neq2^\omega$ in $V[r]$.
\qedhere
\end{itemize}
\end{proof}

As a consequence of Theorem \ref{cohenrandom}, we get that the set of the ground-model reals has strong measure zero after adding a single Hechler real.

\begin{corollary}\label{hechler}
If $d$ is a Hechler real, then $V[d]\models2^\omega\cap V\in\SNwf$.
\end{corollary}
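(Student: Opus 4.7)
The plan is to mimic the Cohen argument of Theorem~\ref{cohenrandom}(i) inside $\Dor$, using the Hechler real $d$ both to designate coding positions in $\omega$ and, via parities, to encode arbitrary bits of a ground-model real. Given a $\Dor$-name $\dot{f}$ for an element of $\omega^\omega$, I will define inside $V[d]$ the sequence $a_0 := 0$ and $a_{n+1} := a_n + \dot{f}(n) + 1$, which is strictly increasing and whose consecutive ``coding intervals'' $[a_n, a_n + \dot{f}(n))$ are pairwise disjoint. Then I set
\[ \dot\sigma(n)(k) := \dot{d}(a_n + k) \bmod 2 \quad \text{for } k < \dot{f}(n), \]
so that $|\dot\sigma(n)| = \dot{f}(n)$; it remains to verify $\Vdash_{\Dor} 2^\omega \cap V \subseteq \bigcup_{n<\omega}[\dot\sigma(n)]$.

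For the verification I would run a density argument. Fix $p = (s_p, F_p^{\mathrm{up}}) \in \Dor$ and $x \in V\cap 2^\omega$. First extend $p$ to a condition $q_0$ deciding $\dot{f}(m) = l_m$ for all $m \leq n$, where $n$ is chosen large enough that the now-determined integer $a_n$ exceeds $|\mathrm{stem}(q_0)|$ (possible because the $a_n$ are strictly increasing). Then extend the stem of $q_0$ to some $s'$ of length at least $a_n + l_n$ in such a way that, for each $k < l_n$, the entry $s'(a_n + k)$ is chosen to be some integer $\geq F_{q_0}^{\mathrm{up}}(a_n + k)$ with parity $x(k)$; such a value exists because above any natural number both parities are realized. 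Intermediate positions may be padded with any admissible values. The resulting $q_1 := (s', F_{q_0}^{\mathrm{up}})$ extends $q_0$ and forces $\dot\sigma(n) = x \upharpoonright l_n$, hence $q_1 \Vdash x \in [\dot\sigma(n)]$.

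The main obstacle is that the direct transcription of the Cohen proof breaks down: since $|\Dor| = 2^{\aleph_0}$, there is no single $F \in V$ uniformly dominating the deciding-length functions $F_p$ that drive the proof of (i). The way around this is to move the coding positions $a_n$ from $V$ into $V[d]$, defining them recursively from $\dot{f}$ itself, and then to exploit the one-sided nature of the Hechler ordering: because it only constrains $d$ from below, arbitrary parities can still be forced at any positions beyond the current stem---precisely the flexibility needed to substitute for the ``code $i$ into $\dot{c}(F(n))$'' step of the proof of (i).
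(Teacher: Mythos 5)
Your route is genuinely different from the paper's, which does not argue inside $\Dor$ at all: it quotes Palumbo's factorization $\Dor\equiv\Dor\ast\Cor$, writes $V[d]=V[d'][c]$ with $c$ Cohen over $V[d']$, and applies Theorem~\ref{cohenrandom}(i) over the intermediate model $V[d']$. Unfortunately your direct density argument has a genuine gap at the step where you pick $n$ ``large enough that the now-determined integer $a_n$ exceeds $|\mathrm{stem}(q_0)|$, possible because the $a_n$ are strictly increasing.'' This is circular: $a_n$ is determined only after you have extended $p$ to a condition $q_0$ deciding $\dot f\upharpoonright(n+1)$, and that extension may force the stem to grow much faster than $a_n$ does; strict monotonicity only yields $a_n\geq n$, which says nothing about the stem length needed to decide $a_n$. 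Concretely, let $\dot f(n):=(\dot d(2^{n})\bmod 2)+1$. Then $a_n\leq 3n$ for every $n$, while any condition deciding $\dot f(0),\dots,\dot f(n)$ must have stem of length greater than $2^{n}$ (both parities of $\dot d(2^n)$ remain available above any side condition). Hence, starting from any $p$ whose stem already has length, say, $13$, for \emph{every} $n$ the coding block $[a_n,a_n+l_n)$ is frozen inside the stem by the time it is identified, and the extension step you rely on is never available.

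The failure is not cosmetic. It is exactly the difficulty that countability of $\Cor$ resolves in Theorem~\ref{cohenrandom}(i) via the single dominating $F$, and which you correctly identify as the obstacle to a naive transcription; but moving the coding positions $a_n$ into $V[d]$ merely relocates the circularity rather than removing it, since those positions now depend on the very name $\dot f$ whose decision controls the stem. A correct direct proof would have to interleave the decision of $\dot f$ with the choice of parities at not-yet-frozen positions, uniformly in the name --- and that is essentially the content of the factorization $\Dor\equiv\Dor\ast\Cor$ that the paper invokes instead: every $\dot f\in V[d]=V[d'][c]$ is then handled by the Cohen argument over $V[d']$, where countability of the forcing is restored.
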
 

Palumbo \cite{palu} proved that $\Dor\ast\Cor\equiv\Dor$, that is, $V[d'][c]=V[d]$ for some $\Dor$-generic real $d'$ over $V$ and a Cohen real $c$ over $V[d']$. By Theorem \ref{cohenrandom}, $V[d'][c]\models 2^\omega\cap V[d']\in\SNwf$, in particular $V[d'][c]\models2^\omega\cap V\in\SNwf$. Then $V[d]\models2^\omega\cap V\in\SNwf$.

The next result appears implicit in \cite{JMS}. 

\begin{theorem}
If $G$ and $G'$ are a $\Vor$-generic over $V$ and a $\Sor$-generic over $V$ respectively, then
\begin{itemize}
    \item[(a)] $V[G]\models2^\omega\cap V\notin\Nwf\cup\Mwf$, in particular, $V[G]\models2^\omega\cap V\notin\SNwf\cup\SMwf$.
    \item[(b)] $V[G']\models2^\omega\cap V\notin\Nwf\cup\Mwf$, in particular, $V[G']\models2^\omega\cap V\notin\SNwf\cup\SMwf$.
\end{itemize}
\end{theorem}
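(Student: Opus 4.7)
The plan is to reduce both (a) and (b) to standard preservation properties of Silver and Sacks forcings, established via fusion. Specifically, I aim to show that each of $\Vor$ and $\Sor$ is both \emph{$\Nwf$-preserving} in the sense that every Borel null set in the extension is contained in a Borel null set coded in the ground model, and analogously \emph{$\Mwf$-preserving} for Borel meager sets. Granting these, the main claims follow immediately: if $2^\omega \cap V \in \Nwf^{V[G]}$ via some Borel null $N \in V[G]$, then $N \subseteq N^*$ for some $N^* \in \Nwf^V$, and so in $V$ we have $2^\omega = 2^\omega \cap V \subseteq N^*$, contradicting that the whole space is not null in $V$. The meager case is symmetric, and both arguments transfer verbatim to $V[G']$.

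To establish these preservation properties, the main tool is fusion. For $\Vor$, recall that a Silver condition is a partial $p : \omega \to 2$ with coinfinite $\dom(p)$; the $n$-th fusion relation $p' \leq_n p$ demands that $p'$ refines $p$ while agreeing with $p$ on its first $n$ free coordinates (the first $n$ elements of $\omega \setminus \dom(p)$), so that a fusion sequence $\langle p_n : n < \omega\rangle$ with $p_{n+1} \leq_{n} p_n$ admits a natural lower bound $p_\omega$. Given a $\Vor$-name $\dot B$ for a Borel null set, coded as $\dot B = \bigcap_k \dot U_k$ with $\mu(\dot U_k) < 2^{-k}$ (respectively a union of closed nowhere dense sets, in the meager case), a fusion argument produces $p_\omega$ such that, for each of the finitely many assignments to the first $n$ free coordinates, a ground-model approximation to $\dot U_n$ below the corresponding extension has been decided. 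Taking the union of these finitely many approximations over $n$ yields a genuine ground-model Borel null (or meager) set $B^* \in V$ with $p_\omega \Vdash \dot B \subseteq B^*$. The Sacks-forcing case runs in parallel, using fusion of perfect trees $T \subseteq 2^{<\omega}$ indexed by splitting levels in place of free coordinates.

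The main obstacle is the bookkeeping in the fusion step: one must simultaneously ensure that (i) each component $\dot U_k$ (or the corresponding closed nowhere dense piece) is decided below $p_n$ for all $2^n$ possible assignments to the controlled free coordinates, (ii) these finitely many approximations aggregate into a legitimate Borel code in $V$, and (iii) the measure bound $2^{-k}$ (or nowhere-denseness) is preserved in the aggregate. None of these are individually difficult, but the combined argument is notationally dense. The argument is the standard one for the Sacks property of $\Sor$, with its well-known adaptation to $\Vor$; it is essentially what is contained in \cite{JMS} (see also the Bartoszy\'nski--Judah monograph for textbook presentations).
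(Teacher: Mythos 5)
First, a point of reference: the paper offers no proof of this theorem at all --- it is stated with only the remark that it ``appears implicit in \cite{JMS}'' --- so there is no in-paper argument to compare yours against. Your overall strategy (reduce to the statement that ground-model-coded Borel null, respectively meager, sets are cofinal among those of the extension, and establish that by fusion) is the standard and correct route, and your reduction is fine: if $2^\omega\cap V\subseteq N^*$ with $N^*$ coded in $V$, then in $V$ the decoded $N^*$ is all of $2^\omega$, contradicting the absoluteness of ``this code is a null (meager) set.'' The ``in particular'' clauses are immediate from $\SNwf\subseteq\Nwf$ and $\SMwf\subseteq\Mwf$. Do note, though, that the cofinality preservation you aim for is stronger than necessary: preservation of positive outer measure and of non-meagerness of ground-model sets of reals (which is what is actually proved for $\Vor$ and $\Sor$ in the literature you cite) already yields (a) and (b).

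Two steps of your fusion sketch would fail as literally written and need the standard repairs. (1) A single $\dot U_k$ (an open set of measure $<2^{-k}$) or a single closed nowhere dense $\dot F_k$ is an infinite object and cannot be ``decided'' at one finite stage of the fusion. You must either split each into countably many finite tasks --- for the meager case, the tasks ``find $t\supseteq s$ forced to satisfy $[t]\cap\dot F_k=\emptyset$'' for all pairs $(s,k)$ --- interleave them along the fusion, and handle each task at its stage by running once through the finitely many current nodes (so that the final $t$ works below all of them simultaneously); or else route through the combinatorial characterizations of $\Nwf$ and $\Mwf$ (slaloms, and sets of the form $\{x:\forall^\infty n\ x\upharpoonright[f(n),f(n+1))\neq y\upharpoonright[f(n),f(n+1))\}$) together with the Sacks property and $\omega^\omega$-bounding of $\Vor$ and $\Sor$. (2) More seriously, your aggregation step destroys the measure bound, which is precisely the point you dismiss as ``not individually difficult'': if at stage $n$ each of the $2^n$ nodes yields a ground-model open set of measure $<2^{-n}$ covering $\dot U_n$, their union is only guaranteed to have measure $<2^n\cdot2^{-n}=1$. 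The fix is to thin the sequence: at stage $n$ work with $\dot U_{m_n}$ for $m_n\geq 2n$, so that the $2^n$-fold union has measure at most $2^{-n}$; since $\bigcap_k\dot U_k\subseteq\bigcap_n\dot U_{m_n}$, nothing is lost. With these repairs your argument is correct for both $\Vor$ and $\Sor$.
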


Miller \cite{Miller} introduced the infinitely  often equal real forcing $\Ior$ to prove that some combinatorial properties of measure and category of the real line are consistent. He also proved that the set of ground-model reals does not become meager (strongly null) after adding a single infinitely often equal real, in particular, the ground-model real does not become strongly meager. To summarize, 

\begin{theorem}
If $G$ is $\Ior$-generic over $V$, then
\begin{itemize}
    \item[(i)] $V[G]\models2^\omega\cap V\notin\SNwf$, and 
    \item[(ii)] $V[G]\models2^\omega\cap V\notin\SMwf$.
\end{itemize}
\end{theorem}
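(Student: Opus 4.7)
The plan is to derive both parts as immediate consequences of results established by Miller in \cite{Miller}. The statement really functions as a summary of two of Miller's theorems, plus a trivial bookkeeping step for one of them.

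For part (i), I would quote directly Miller's statement that the ground-model reals fail to be strong measure zero in the generic extension, i.e., $V[G]\models 2^\omega\cap V\notin\SNwf$. I would resist the temptation to reduce this to part (ii): strong measure zero sets are not always meager (under CH, Luzin sets are non-meager yet of strong measure zero), so knowing only that $2^\omega\cap V$ is non-meager in $V[G]$ would not be enough to conclude it is not in $\SNwf$.

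For part (ii), I would appeal to Miller's companion theorem that $V[G]\models 2^\omega\cap V\notin\Mwf$, together with the inclusion $\SMwf\subseteq\Mwf$ recorded in the Introduction. Since every strongly meager set is meager, the contrapositive gives at once $V[G]\models 2^\omega\cap V\notin\SMwf$.

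The main obstacle is contained entirely in Miller's original proofs. The combinatorial heart of $\Ior$ is that the generic $e$ is infinitely often equal to each ground-model function: $\{n : e(n)=f(n)\}$ is infinite for every $f\in V\cap\omega^\omega$. Using this property, the plan in Miller's argument is to take a hypothetical $\Ior$-name $\dot F$ for a meager $F_\sigma$ set (respectively a strong-measure-zero cover of prescribed heights) that is claimed to cover $2^\omega\cap V$, and to construct in $V$, via a fusion-style density argument along conditions of $\Ior$, a specific ground-model real that escapes $\dot F$. The delicate point is coordinating the fusion so that the infinitely-often-equal clause can be exploited at each level to read off the escaping real from values of $e$ at coincidence coordinates; this is where I would expect to spend the bulk of the effort if reproving the statement from scratch rather than citing \cite{Miller}.
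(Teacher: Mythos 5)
Your proposal matches the paper exactly: the paper offers no independent proof, but simply records that Miller proved the ground-model reals do not become meager (hence not strongly meager, via $\SMwf\subseteq\Mwf$) and do not become strongly null after adding an infinitely often equal real. Your handling of the two parts --- citing Miller directly for (i), and combining Miller's non-meagerness result with the inclusion $\SMwf\subseteq\Mwf$ for (ii) --- is precisely the paper's reasoning.
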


We finish this section with results related to the Laver property.

\begin{theorem}[{\cite{bartJudah},\cite[Theorem  8.5.20]{BJ}}]
Assume that $\Por$ has the Laver property. Then $\Vdash_{\Por}2^\omega\cap V\notin\SMwf$.
\end{theorem}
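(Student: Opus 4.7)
The plan is to exhibit, in $V[G]$, a Borel null set $N$ satisfying $(2^\omega \cap V) + N = 2^\omega$; by the definition of strongly meager this witnesses $V[G] \models 2^\omega \cap V \notin \SMwf$. Observe at the outset that $N$ cannot be coded in the ground model: if $N \in V$ were a null Borel set, then $V + N \subseteq V \subsetneq 2^\omega$ in $V[G]$, so no family of $V$-translates of $N$ could cover a new real. Hence the construction must genuinely exploit the forcing.

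The setup is the standard block decomposition. Fix in $V$ an increasing sequence $\la k_n : n < \omega \ra$ with $k_{n+1} - k_n$ growing fast enough that $\sum_n n \cdot 2^{-(k_{n+1}-k_n)} < \infty$, and for $z \in 2^\omega$ write $f_z(n) := z \upharpoonright [k_n, k_{n+1})$. For any $r \in V[G] \cap 2^\omega$, the sequence $f_r$ is bounded block-wise by the ground-model function $n \mapsto 2^{k_{n+1}-k_n}$. Applying the Laver property to $f_r$ yields a slalom $\psi_r \in V$ with $|\psi_r(n)| \leq n$ and $f_r(n) \in \psi_r(n)$ for cofinitely many $n$; hence $r$ lies in the $V$-coded Borel null set $N_{\psi_r} := \{z : \forall^\infty n\ f_z(n) \in \psi_r(n)\}$.

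The key step is to package the family $\{N_{\psi_r} : r \in V[G]\}$ into a single Borel null set $N \in V[G]$ meeting every coset of $2^\omega \cap V$. The plan is to construct a slalom $\phi \in V[G]$ with $|\phi(n)|$ growing only moderately (so that $N_\phi$ remains null, e.g. $|\phi(n)| \leq n^2$ after enlarging $k_{n+1}-k_n$ as needed) and with the following absorption property: for every ground-model slalom $\psi$ with $|\psi(n)| \leq n$, there exists $v_\psi \in 2^\omega \cap V$ such that $\psi(n) + f_{v_\psi}(n) \subseteq \phi(n)$ for infinitely many $n$. Setting $N := \{z \in 2^\omega : \exists^\infty n\ f_z(n) \in \phi(n)\}$, the Borel--Cantelli lemma together with the choice of $\la k_n \ra$ gives that $N$ is null. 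For any $r \in V[G]$, taking $v := v_{\psi_r} \in 2^\omega \cap V$ yields $f_{r+v}(n) = f_r(n) + f_v(n) \in \psi_r(n) + f_v(n) \subseteq \phi(n)$ for infinitely many $n$; equivalently $r + v \in N$, so $(2^\omega \cap V) + N = 2^\omega$, as required.

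The principal obstacle is constructing the universal slalom $\phi$ in $V[G]$. It must simultaneously absorb all (continuum-many) ground-model slaloms of size $\leq n$ up to a $V$-translate, while keeping $|\phi(n)|$ small enough for $N$ to remain null. The diagonalization is the combinatorial heart of the argument: one uses the Laver property, together with the existence of a non-trivial new real (available whenever $\Por$ actually adds reals---otherwise the claim is vacuous), to encode the bookkeeping over $V$-slaloms into a single generic slalom of the required form. This is essentially the technique of Bartoszy\'nski--Judah \cite[Theorem~8.5.20]{BJ}.
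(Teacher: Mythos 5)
Your opening ``observation'' is false, and it steers the whole attempt off course. A Borel null set $N$ coded in $V$ is re-interpreted in $V[G]$ and in general acquires new elements there, so $(2^\omega\cap V)+N\subseteq 2^\omega\cap V$ simply does not hold: for instance $N=\{z:\forall n\ z(2n)=0\}$ is null and coded in $V$, yet in $V[G]$ the sumset $(2^\omega\cap V)+N$ contains every real whose restriction to the even coordinates agrees with a ground-model real, hence plenty of new reals. In fact the standard proof of this theorem (Bartoszy\'nski--Judah) uses a null set coded in the ground model, namely $H=\{z:\exists^\infty n\ z\upharpoonright J_n\equiv 0\}$ for pairwise disjoint intervals $J_n$ with $|J_n|\approx 3\log_2 n$ (null because $\sum_n 2^{-|J_n|}<\infty$). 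Also, the case where $\Por$ adds no reals is not ``vacuous'': then $2^\omega\cap V=2^\omega$ in $V[G]$, which is never strongly meager; it is just trivial.

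The genuine gap is that the ``universal slalom'' $\phi\in V[G]$ --- of width $n^2$, absorbing a $V$-translate of every ground-model width-$n$ slalom on infinitely many coordinates --- is the entire content of the proof, and you neither construct it nor is it plausible that it exists in the form you demand: at each $n$ a set $\phi(n)$ of size $n^2$ contains only $\binom{n^2}{n}$ many $n$-element subsets, while the translates $\psi(n)+f_v(n)$ range over essentially arbitrary $n$-element subsets of a block of size $2^{|I_n|}$, so the quantifier bookkeeping (``for each of continuum many $\psi\in V$ there is $v_\psi\in V$ working infinitely often'') is exactly what needs proof. The correct route avoids this entirely by choosing the translating real \emph{after} the new real $x$, via a second use of the Laver property: group the coordinates into consecutive blocks $B_k$ with $|B_k|=k$, apply the Laver property to $k\mapsto (x\upharpoonright J_n)_{n\in B_k}$ (a function bounded by a ground-model function) to obtain in $V$ at most $k$ candidates $t^k_1,\dots,t^k_k$ for its value at $k$, and define $v\in V$ on the $i$-th coordinate $n^k_i$ of $B_k$ by $v\upharpoonright J_{n^k_i}:=t^k_i(n^k_i)$. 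Since the true value equals some $t^k_i$, $v$ agrees with $x$ on $J_n$ for at least one $n$ in every large block, hence for infinitely many $n$; thus $x+v\in H$ and $(2^\omega\cap V)+H=2^\omega$. This ``infinitely-often-equal prediction'' lemma is the combinatorial heart you left out, and without it (or the universal $\phi$ you postulate) the argument does not close.
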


As a corollary we get

\begin{corollary}
If $G$ and $G'$ are $\Mor$-generic over $V$ and $\Lor$-generic over $V$´respectively, then 
\begin{itemize}
\item[(i)] $V[G]\models2^\omega\cap V\notin\SMwf$.
\item[(ii)] $V[G']\models2^\omega\cap V\notin\SMwf$.
\end{itemize}
\end{corollary}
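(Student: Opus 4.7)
The plan is to invoke the preceding theorem directly: it asserts $\Vdash_{\Por}2^\omega\cap V\notin\SMwf$ for any forcing $\Por$ with the Laver property, so the corollary reduces to the single task of verifying that both Mathias forcing $\Mor$ and Laver forcing $\Lor$ enjoy the Laver property. Once this is in hand, no further argument specific to $\SMwf$ is required: taking $\Por:=\Mor$ in the preceding theorem yields (i), and taking $\Por:=\Lor$ yields (ii).

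For (ii) this is a classical result of Laver, established in his proof of the consistency of the Borel conjecture; indeed the property takes its name from this very fact, and a modern textbook account appears in \cite[Ch.~7]{BJ}. For (i), Mathias forcing is likewise well known to have the Laver property. The standard route exploits its natural two-step factorization: a $\sigma$-closed stage adding a selective ultrafilter on $\omega$, followed by a Prikry-style diagonalization along that ultrafilter. Each factor admits a fusion-type pure-decision argument, and the two combine to produce a slalom of the required shape; again the reference is \cite[Ch.~7]{BJ}.

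The only point requiring minor care is terminological: one must make sure that the formulation of the Laver property used in the theorem quoted from \cite{bartJudah} and \cite{BJ} matches the one classically verified for $\Mor$ and $\Lor$. Since in both cases the source is the same monograph, this alignment is automatic, and no substantive obstacle remains.
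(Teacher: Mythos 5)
Your proposal is correct and matches the paper's (implicit) argument exactly: the paper derives the corollary immediately from the preceding theorem together with the standard facts that $\Mor$ and $\Lor$ have the Laver property, offering no further proof. Your additional remarks on why each forcing has the Laver property only make explicit what the paper leaves to the reader.
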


On the other hand, Laver \cite{Laver} proved that adding an $\Mor$-generic over the ground-model $V$ forces all uncountable sets of reals in $V$ to not have strong measure
zero in the extension, that is, $V[G]\models2^\omega\cap V\notin\SNwf$. 

It is known that the set of the ground-model reals does not have measure zero after adding a $\Lor$-generic over $V$, that is, $V[G]\models2^\omega\cap V\notin\Nwf$, in particular $V[G]\models2^\omega\cap V\notin\SNwf$.

\section*{Open problems}

Miller \cite{Miller} proved that, if $c$ is a Cohen real over $V$ and $r$ is a random real over $V[c]$, then $V[c][r]\models 2^\omega\cap V[r]\notin\Mwf$, in particular $V[c][r]\models 2^\omega\cap V[r]\notin\SMwf$. Afterwards, Cicho\'n and Palikowski \cite{cicpal} proved that, if $r$ is a random real over $V$ and $c$ is a Cohen over $V[r]$, then $V[r][c]\models 2^\omega\cap V[c]\in\Nwf$. Later Palikowski \cite{paw86} proved that 
\begin{itemize}
    \item[(i)] If $r$ is a random real over $V$ and $c$ is a Cohen over $V[r]$, then \[V[r][c]\models 2^\omega\cap V[c]\notin\Mwf.\] In particular, $V[r][c]\models 2^\omega\cap V[c]\notin\SMwf$.
    \item[(ii)] If $c$ is a Cohen real over $V$ and $r$ is a random over $V[c]$, then \[V[c][r]\models 2^\omega\cap V[r]\in\Nwf.\]
\end{itemize}

We ask the following problems.

\begin{question}
If $c$ is a Cohen real over $V$ and $r$ is a random over $V[c]$, does \[V[c][r]\models 2^\omega\cap V[r]\in\SNwf?\]   
\end{question}
    
\begin{question}
If $r$ is a random real over $V$ and $c$ is a Cohen over $V[r]$, does \[V[r][c]\models 2^\omega\cap V[c]\in\SNwf?\]
\end{question}
 
In Corollary \ref{hechler} it was proved that the ground-model real become strongly null after adding a single Hechler real, but it is still open the following question. 
 
\begin{question}
If $d$ is a Hechler real over $V$, does $V[d]\models2^\omega\cap V\in\SMwf$?
\end{question}
 
It is known that 
\begin{itemize}
    \item[(a)] $\Vdash2^\omega\cap V\in\Nwf$.
    \item[(b)] $\Vdash2^\omega\cap V\in\Mwf$.
\end{itemize}
for the following posets:
\begin{itemize}
    \item[(1)] The eventually different real forcing $\Eor$.
    \item[(2)] The localization forcing $\Loc$.
    \item[(3)] Amoeba forcing $\Aor$.
\end{itemize}
 
It is natural to ask:

\begin{question}
For the posets in the list above do we have 
\begin{itemize}
    \item[(i)] $\Vdash2^\omega\cap V\in\SNwf$?
    \item[(ii)] $\Vdash2^\omega\cap V\in\SMwf$?
\end{itemize}
\end{question}

\subsection*{Acknowledgments} This work was supported by the Austrian Science Fund (FWF) P30666 and the author is a recipent of a DOC Fellowship of the Austrian Academy of Sciences at the Institute of Discrete Mathematics and Geometry, TU Wien. 

This paper was developed for the conference proceedings corresponding to the Set
Theory Workshop that Professor Daisuke Ikegami organized in November 2019. The
author is very thankful to Professor Ikegami for letting him participate in such wonderful
workshop.

The author thanks  Dr.\ Diego Mej\'ia for his valuable comments while working on this paper. The author also thanks  Dr.\ Martin Goldstern for reading this work and for his remark on  references and grammar corrections.

{\small
\bibliography{left}
\bibliographystyle{alpha}
}

\end{document}